\numberwithin{equation}{section}
\theoremstyle{plain}
\newtheorem{theorem}{Theorem}[section]
\newtheorem{lemma}[theorem]{Lemma}
\newtheorem{corollary}[theorem]{Corollary}
 \theoremstyle{definition}
\newtheorem{definition}[theorem]{Definition}
\newtheorem{?}[theorem]{Problem}
\newtheorem*{Def*}{Definition}
\title{Large induced trees in dense random graphs}
\author{%
Nemanja Dragani\'c \thanks{Department of Mathematics, ETH, Z\"urich, Switzerland. Email: \href{mailto:nemanja.draganic@math.ethz.ch} {\nolinkurl{nemanja.draganic@math.ethz.ch}}.}
  }
\begin{document}
\maketitle
\begin{abstract}
Erd\H{o}s and Palka initiated the study of the maximal size of induced trees in random graphs in 1983. They proved that for every fixed $0<p<1$ the size of a largest induced tree in $G_{n,p}$ is concentrated around $2\log_q (np)$ with high probability, where $q=(1-p)^{-1}$. De la Vega showed concentration around the same value for $p=C/n$ where $C$ is a large constant, and his proof also works for all larger $p$. We show that for any given tree $T$ with bounded maximum degree and of size $(2-o(1))\log_q(np)$, $G_{n,p}$ contains an induced copy of $T$ with high probability for $n^{-1/2}\ln^{10/9}n\leq p\leq 0.99$. This is asymptotically optimal.
\end{abstract}
\section {Introduction}

The problem of existence of large induced trees in random graphs was first studied by of Erd\H{o}s and Palka \cite{erdos1983trees}. For a given graph $G$, with $T(G)$ we denote the size of a largest induced subtree of $G$. In the mentioned paper the following was proven:
\begin{theorem}
For every $\epsilon>0$ and for every fixed $0<p<1$, with probability $1-o(1)$ it holds that
\begin{equation*}
    (2-\epsilon)\log_q (np)<T(G_{n,p})<(2+\epsilon)\log_q (np)
\end{equation*}
where $q=(1-p)^{-1}$.
\end{theorem}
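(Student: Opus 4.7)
Let $X_k$ denote the number of induced $k$-vertex subtrees of $G_{n,p}$. By Cayley's formula,
\[
\mathbb{E}[X_k]\;=\;\binom{n}{k}\,k^{k-2}\,p^{k-1}(1-p)^{\binom{k}{2}-(k-1)}.
\]
Using $\binom{n}{k}\leq(en/k)^k$ and $\log(1-p)=-\log q$, a direct Stirling-type calculation yields
\[
\log\mathbb{E}[X_k]\;=\;k\log(np)\;-\;\tfrac{(k-1)(k-2)}{2}\log q\;+\;O(k),
\]
with the $O(k)$ correction negligible on the scale $k\asymp\log_q(np)$.

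\emph{Upper bound.} Take $k=\lceil(2+\epsilon)\log_q(np)\rceil$; then $k\log q=(2+\epsilon)\log(np)+O(1)$, so the quadratic term $\tfrac{k^2}{2}\log q$ beats $k\log(np)$ by a factor $(2+\epsilon)/2>1$. Hence $\mathbb{E}[X_k]\to 0$, and Markov's inequality gives $T(G_{n,p})<(2+\epsilon)\log_q(np)$ with probability $1-o(1)$.

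\emph{Lower bound.} Setting $k=\lceil(2-\epsilon)\log_q(np)\rceil$, the same identity yields $\mathbb{E}[X_k]\to\infty$, and I would apply the second moment method. Write
\[
\mathbb{E}[X_k^2]=\sum_{j=0}^{k}\;\sum_{\substack{(T,T'):\\|V(T)\cap V(T')|=j}}\Pr[T,T'\text{ both induced}];
\]
the $j=0$ contribution equals $(1+o(1))\mathbb{E}[X_k]^2$. For $j\geq 1$, consistency forces $T$ and $T'$ to induce the same forest $F$ on their shared $j$ vertices, and the joint probability equals $p^{2(k-1)-|F|}(1-p)^{2\binom{k}{2}-2(k-1)-(\binom{j}{2}-|F|)}$, shared non-edges being realised by a single coin flip. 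The main obstacle is controlling the resulting sum: the gain $q^{\binom{j}{2}-|F|}$ from shared non-edges grows exponentially in $j^2$, but is outweighed by the combinatorial penalty $\binom{k}{j}\binom{n-k}{k-j}/\binom{n}{k}\approx(k^2/n)^j/j!$ of fixing $j$ common vertices, precisely because $k=(2-\epsilon)\log_q(np)$ lies a linear amount below the balance point $2\log_q(np)$. One checks that the $j$-th term is at most $c^{\,j}/j!$ times $\mathbb{E}[X_k]^2$ for some $c=o(1)$; summing gives $\operatorname{Var}(X_k)=o(\mathbb{E}[X_k]^2)$, and Chebyshev yields $X_k\geq 1$ whp. (A naive greedy extension only produces induced trees of size $\approx\log_q(np)$, half the optimum, so the second-moment route seems essential.)
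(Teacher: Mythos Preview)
This theorem is not proved in the paper; it is quoted as the result of Erd\H{o}s and Palka \cite{erdos1983trees} and used only as background. So there is no ``paper's own proof'' to compare against. Your sketch is the standard Erd\H{o}s--Palka argument and is correct in outline: the first-moment/Markov step for the upper bound is routine, and for fixed $p$ the second-moment lower bound goes through because every tree-counting factor is at most $k^{O(k)}=n^{o(1)}$, while the dominant ratio $(1-p)^{-\binom{j}{2}}\cdot (k^{2}/n)^{j}$ is driven below $1$ precisely by taking $k=(2-\epsilon)\log_q(np)$.

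One point worth tightening: the ``combinatorial penalty'' you write, $\binom{k}{j}\binom{n-k}{k-j}/\binom{n}{k}$, accounts only for the overlap of the vertex sets. You still have to sum over the shapes of the common forest $F$ and over the pairs of labeled $k$-trees extending $F$ on each side; these factors are not literally absorbed into a single $c^{j}/j!$ with $c=o(1)$, but they are all $n^{o(1)}$ for constant $p$ and hence do not disturb the conclusion. The sketch elides this bookkeeping.

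For context, the paper's own contribution (Theorems~\ref{general} and~\ref{Main Thm}) is a strengthening of the lower bound: instead of showing that \emph{some} induced tree of size $(2-o(1))\log_q(np)$ exists, it shows that \emph{every} fixed bounded-degree tree of that size appears, and for $p$ down to about $n^{-1/2}$. That argument also runs the second moment, but over ordered embeddings of a fixed tree $T$ rather than over all labeled trees; the overlap combinatorics are then handled via the quantities $S(\ell,k)$ in Section~2.3, which is a finer version of exactly the count you are sketching.
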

The proof of the upper bound in this paper actually works for every $p=p(n)<1-\alpha$ where $\alpha>0$ is a constant. Therefore $2\log_q (np)$ is asymptotically the best we can hope for the maximal size of an induced subtree in every regime.\par
Erd\H{o}s and Palka conjectured that for $p=C/n$, for a large fixed constant $C$, $G_{n,p}$ contains a linear sized induced subtree. Several researchers independently proved the conjecture \cite{frieze1987large2,kuvcera1987large,luczak1988maximal}. After this question was settled, de la Vega \cite{de1996largest}  showed that $T(G)$ is at least $(1-o(1))2\log_q(np)$ in the same regime. In fact, his idea can be used to show the same bound for all larger $p$.

\par
The first papers which deal with sizes of specific induced trees in random graphs are by Frieze and Jackson \cite{frieze1987large} and Suen \cite{suen1992large}. They were interested in the size of the largest path in the regime $p=C/n$, where $C$ is a large constant. {\L}uczak \cite{luczak1993size} proved that for $p=C/n$ where $C$ is large enough, the size of the largest induced path in $G_{n,p}$ is at least  $(1/2-o(1))2\log_q(np)$.
In the dense case where $p$ is a constant, Ruci{\'n}ski \cite{rucinski1987induced} proved that the largest induced path is at least $(1-o(1))2\log_q(np)$, which was extended by Dutta and Subramanian \cite{dutta2018induced} to $p\geq n^{-1/2}(\ln n)^2$. Moreover \cite{dutta2018induced} shows concentration in two values for the size of the largest path.

\par In this paper we prove that for any fixed tree $T$ of size $(1-o(1))2\log_q(np)$ and bounded maximum degree and $n^{-1/2}\ln^{10/9}n\leq p\leq 0.99$, $G_{n,p}$ contains $T$ as an induced subtree, with high probability, i.e. with probability $1-o(1)$.

\begin{theorem}\label{general}
    Let $\Delta\geq 2$ be an integer and $p=p(n)$ be such that $n^{-1/2}\ln^{10/9}n<p<0.99$. Let $T$ be a tree with  $b=(1-o(1))2\log_q(np)$  vertices and maximum degree $\Delta$. Then with high probability $G_{n,p}$ contains $T$ as an induced subgraph.
\end{theorem}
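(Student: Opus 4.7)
The plan is to embed $T$ into $G=G_{n,p}$ one vertex at a time, in a rooted BFS order $t_1,\dots,t_b$. Writing $\mathrm{par}(i)$ for the BFS parent index of $t_i$, the partial embedding $\phi$ is extended at step $i$ by choosing $\phi(t_i)$ to be a vertex that is adjacent in $G$ to $\phi(t_{\mathrm{par}(i)})$, non-adjacent to every other already-embedded image $\phi(t_j)$ with $j<i$, and not already used. By the principle of deferred decisions, conditional on what has been embedded so far each unused vertex qualifies as a valid candidate independently with probability $p(1-p)^{i-2}$, and Chernoff then gives tight concentration of the candidate count around its mean $(n-i+1)\,p(1-p)^{i-2}$ whenever this mean exceeds a polylog in $n$.

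The central obstacle is that the mean drops below $1$ already around $i\approx \log_q(np)$, so naive single-pass greedy reaches only half the target depth $b\approx 2\log_q(np)$. To recover the factor of two I would split $T$, using the bounded-degree hypothesis, into a ``top'' subtree $T^*\subseteq T$ of size $(1+o(1))\log_q(np)$ and a ``bottom'' forest $T\setminus T^*$ of pendant subtrees each of size at most $(1+o(1))\log_q(np)$ rooted at attachment vertices in $T^*$. The argument is then run in two phases with a two-round exposure $G_{n,p}=G_{n,p_1}\cup G_{n,p_2}$, $p_1,p_2\approx p/2$. In the first phase, embed $T^*$ inside $G_{n,p_1}$ by the greedy procedure above, but at each step retain \emph{every} valid candidate rather than committing to one; the concentration bound makes this produce a polynomially large family $\mathcal{F}$ of partial embeddings of $T^*$. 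In the second phase, attempt to extend each $\psi\in\mathcal{F}$ to a full induced copy of $T$ by greedily embedding the pendant subtrees in the fresh edges of $G_{n,p_2}$.

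The technical heart of the argument, and the step I expect to be the main difficulty, is this second phase. For any single $\psi\in\mathcal{F}$ the expected number of valid extensions is $o(1)$, so existence of an extension for some member of $\mathcal{F}$ must be proved by a second-moment / Paley--Zygmund argument over $\mathcal{F}$. The obstacle is to control the correlation between extensions of two partial embeddings that share many vertices; this is where the density assumption $p\geq n^{-1/2}\ln^{10/9}n$, equivalently $np^2\geq \ln^{20/9}n$, enters as the natural barrier, because it guarantees that the common neighborhoods of already-embedded vertices have size $\gg \mathrm{polylog}(n)$, so that the overlap contributions in the second moment can be absorbed into a polylog factor and concentration closes. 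Once this variance bound is in place, Chebyshev produces, with high probability, some $\psi\in\mathcal{F}$ that extends through every pendant subtree, giving an induced copy of $T$ in $G_{n,p}$.
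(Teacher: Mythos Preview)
Your plan is genuinely different from the paper's argument, and it has several gaps that I do not see how to close. For reference, the paper does a \emph{direct} second moment over all ordered embeddings $\Phi:V(T)\hookrightarrow [n]$: with $X=\sum_i X_i$ it shows $E[X]\to\infty$ and then bounds $\sum_{i,j}(E[X_iX_j]-E[X_i]E[X_j])/E[X]^2$ by classifying pairs according to $\ell=|A_i\cap A_j|$ and the number $k$ of components of the overlap inside $T$. The bounded-degree hypothesis is used only to force $k\le(b-\ell)\Delta$ when $\ell<b$, and the rest is a (long but elementary) estimate of $\sum_{\ell,k}S(\ell,k)\,p^{-(\ell-k)}(1-p)^{\ell-k-\binom{\ell}{2}}$; the assumption $p\ge n^{-1/2}\ln^{10/9}n$ enters only in that final calculus, not as a structural decoupling device.

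The first concrete problem with your plan is that edge sprinkling does not decouple the two phases for \emph{induced} copies: an induced embedding requires non-edges in $G_{n,p}=G_{n,p_1}\cup G_{n,p_2}$, hence in both rounds, so the Phase-1 non-adjacency checks already consume the randomness you want to call ``fresh'' in Phase~2. The second, more serious problem is the size of $\mathcal{F}$. If at each BFS step you retain every valid candidate, $|\mathcal{F}|$ is of order $\prod_{i\le|T^*|}np(1-p)^{i-2}\approx (np)^{|T^*|}(1-p)^{\binom{|T^*|}{2}}=(np)^{(1/2+o(1))\log_q(np)}$, which is super-polynomial, not polynomial; and you \emph{need} it this large, since for a fixed $\psi$ the expected number of full extensions is $(np)^{b-|T^*|}(1-p)^{\binom{b}{2}-\binom{|T^*|}{2}}=(np)^{-(1/2+o(1))\log_q(np)}$. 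So Phase~2 is a second-moment computation over a super-polynomial family whose members, by construction, overlap heavily; controlling those correlations is exactly the $(\ell,k)$-overlap calculation the paper carries out for the whole of $T$, and your two-phase packaging neither avoids nor simplifies it. Finally, $|\mathcal{F}|$ itself is not delivered by Chernoff: the per-step candidate counts depend on which branch you are on, so $|\mathcal{F}|$ is not a product of independent variables, and establishing its size already needs a second-moment argument for $T^*$. At that point the natural move is to run the second moment for all of $T$ directly, which is what the paper does.
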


More precisely, we will prove the following sharper bound, for $p<1/\ln n$.

\begin{theorem}\label{Main Thm}
Let $\Delta\geq 2$ be an integer and $p=p(n)$ be such that $n^{-1/2}\ln^{10/9}n<p<\frac{1}{\ln n}$. Let $T$ be a tree with 
$$ b=\Big(1-\frac{3\ln\ln (np)}{2\ln (np)}\Big)2\log_q(np)$$ vertices and maximum degree $\Delta$. Then with high probability $G_{n,p}$ contains $T$ as an induced subgraph.
\end{theorem}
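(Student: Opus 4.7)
The plan is to count induced copies of $T$ in $G_{n,p}$ via a second-moment argument. Let $Y$ denote the number of labelled injections $\phi:V(T)\to V(G)$ such that $\phi(T)$ is an induced copy of $T$. I would establish $\mathbb E[Y]\to\infty$ and $\mathbb E[Y^2]=(1+o(1))\mathbb E[Y]^2$; Chebyshev then gives $\Pr[Y\ge 1]=1-o(1)$, proving that $G_{n,p}$ contains an induced copy of $T$ with high probability.

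For the first moment, $\mathbb E[Y]=(n)_b\,p^{b-1}(1-p)^{\binom{b}{2}-b+1}$, so
\[
\ln\mathbb E[Y]=\ln n+(b-1)\ln(np)-\tbinom{b-1}{2}\ln q.
\]
Writing $L=\log_q(np)$ and $b=2L-3\log_q L$, the identity $L\ln q=\ln(np)$ collapses the right-hand side to $(3+o(1))L\ln L$, which tends to infinity throughout the hypothesised range of $p$. Intuitively, setting $b$ slightly below the critical size $2\log_q(np)$ by the term $3\log_q\ln(np)$ pumps $\mathbb E[Y]$ up by a factor of order $(\ln(np))^{3L}$, which is exactly the cushion needed for the variance computation. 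The greedy-embedding heuristic provides a useful sanity check: the expected candidate pool at step $i$ is $\mu_i=(n-i+1)p(1-p)^{i-2}$, which shrinks to $\ll 1$ near $i=2L$, and the $\ln^3(np)$ cushion is precisely what is needed to beat this shrinkage in a global existence argument.

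The main obstacle is the second-moment bound. I would classify pairs of labelled copies $(\phi,\psi)$ according to the size $t=|\phi(V(T))\cap\psi(V(T))|$ of their image-overlap and the isomorphism type $F$ of the induced sub-forest of $T$ on the overlap. A direct count shows that the contribution of type $(t,F)$ to the ratio $\mathbb E[Y^2]/\mathbb E[Y]^2$ is of order
\[
n^{-t}\,N(t,F)\,p^{-e(F)}(1-p)^{-(\binom{t}{2}-e(F))},
\]
where $N(t,F)$ counts the combinatorial positions of $F$ inside $T$ (on both sides of the overlap). The bounded-$\Delta$ hypothesis on $T$ controls $N(t,F)$ in a useful way, and the $(\ln(np))^{3L}$ cushion in the first moment dominates the exponential weight $p^{-e(F)}(1-p)^{-(\binom{t}{2}-e(F))}$ uniformly over the feasible $F$, so that summing over all $t\ge 1$ and over all sub-forest shapes yields $o(1)$. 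Combined, these estimates give $\mathrm{Var}(Y)=o(\mathbb E[Y]^2)$ and hence the theorem by Chebyshev. The delicate point in carrying this out is the uniformity of the cushion over the isomorphism type $F$: one has to verify that the worst case (roughly $F$ a subtree of $T$ of size $\sim L$) is still beaten by the chosen cushion, which is exactly the reason for the numerical constant $3$ in front of $\log_q\ln(np)$.
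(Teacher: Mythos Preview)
Your plan is the paper's approach: second moment on labelled induced copies, pairs classified by the overlap structure, then Chebyshev. Two points in the sketch need correction before it can be carried through.

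First, you omit the structural observation that drives the whole case analysis. If the overlap has $t<b$ vertices and induces a forest in $T$ with $k$ components, then (since $T$ is connected) each component attaches by an edge of $T$ to one of the $b-t$ non-overlap vertices, forcing $k\le(b-t)\Delta$. Without this constraint the near-full-overlap terms (large $t$, many components) do not vanish; with it, those configurations simply do not occur. The paper's case split is organised precisely around whether the active upper bound on $k$ is $k\le t$ or $k\le(b-t)\Delta$, and the bound on $N(t,F)$ that you leave vague is obtained by rooting each component and growing it vertex by vertex inside the bounded-degree tree.

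Second, your worst-case diagnosis is off on both axes. For fixed $t$, the dominant forest shape is the one with the \emph{maximum} feasible number of components, not a connected subtree: although the weight $p^{-e(F)}(1-p)^{-(\binom{t}{2}-e(F))}$ favours more edges, the count $N(t,F)$ grows faster in $k$, and the product is maximised at $k=\min\{t,(b-t)\Delta\}$. And the overlap size at which the constant $3$ is actually tested is $t$ near $b$, not $t\sim L$; there the cushion exponent $3/2$ must beat $1$, which is why any constant larger than $2$ would do in place of $3$. (A small slip: $b=2L-3\log_q\ln(np)$, not $2L-3\log_q L$; since $\ln L\sim\ln(1/p)\gg\ln\ln(np)$ in this range of $p$, these differ substantially, and your first-moment asymptotic should read $(3+o(1))L\ln\ln(np)$.)
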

To get the remaining part of Theorem \ref{general}, when $p\geq 1/\ln n$, one can basically follow the same proof, but the calculations get much simpler. For clarity of presentation and to avoid repeating arguments we omit this part of the proof.

It is known that the size of the largest independent set in $G(n,p)$ is with high probability asymptotically equal to $2\log_q(np)$, see for example \cite{janson2011random}. As a natural bridge between trees and independent sets, we further propose the study of large induced forests. One can easily see that Theorem \ref{general} yields the following result:
\begin{theorem}\label{Thm:3}
    Let $\Delta\geq 2$ be an integer and $\varepsilon>0$. Let $p=p(n)$ be such that $n^{-1/2}\ln^{10/9}n<p<0.99$. Let $F$ be a forest with 
$$b=(1-\varepsilon)2\log_q(np)$$ vertices and maximum degree $\Delta$. Then with high probability $G_{n,p}$ contains $F$ as an induced subgraph.
\end{theorem}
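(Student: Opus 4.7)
The plan is to deduce Theorem \ref{Thm:3} directly from Theorem \ref{general}: I would build a bounded-degree tree $T^{*}$ that has $F$ as an induced subgraph and only slightly more vertices than $F$, apply Theorem \ref{general} to $T^{*}$, and restrict the resulting induced embedding of $T^{*}$ in $G_{n,p}$ back to the vertex set of $F$.

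For the construction I would use a ``hub'' assembly. Let $T_1,\dots,T_k$ be the components of $F$ and pick a leaf (or the unique vertex, if $T_i$ is a singleton) $r_i\in V(T_i)$ for each $i$. Fix a constant $D=D(\varepsilon,\Delta)$ to be chosen later, introduce $m:=\lceil k/(D-2)\rceil$ new hub vertices $h_1,\dots,h_m$ joined consecutively by a path, and attach each root $r_i$ to some hub so that every hub carries at most $D-2$ of the roots. The resulting graph $T^{*}$ is a tree on $V(F)\cup\{h_1,\dots,h_m\}$ with maximum degree at most $\max(\Delta,D)$, and every edge of $T^{*}$ not in $F$ is incident to a hub, so $T^{*}[V(F)]=F$.

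The main obstacle in this reduction is the size bookkeeping: the naive construction that adds one private bridge vertex between every two consecutive components fails when $F$ has many singletons (e.g.\ $F=b\cdot K_1$), since then $|T^{*}|\approx 2b$, which exceeds the $2\log_q(np)$ threshold of Theorem \ref{general}. The hub construction circumvents this precisely because each hub can absorb up to $D-2$ components at once while the maximum degree stays bounded. Since $k\le b=(1-\varepsilon)\cdot 2\log_q(np)$, a short calculation shows that choosing $D$ a sufficiently large constant depending only on $\varepsilon$ yields $|T^{*}|\le (1-\varepsilon/2)\cdot 2\log_q(np)$. Thus $T^{*}$ satisfies the hypotheses of Theorem \ref{general}, which provides an induced copy of $T^{*}$ in $G_{n,p}$ with high probability; restricting this embedding to $V(F)$ delivers the desired induced copy of $F$.
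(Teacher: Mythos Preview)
Your reduction is correct and follows essentially the same route as the paper: both proofs hang the components of $F$ off a short path of new ``hub'' vertices so that the resulting tree has bounded maximum degree and at most $(1-\varepsilon/2)\cdot 2\log_q(np)$ vertices, then invoke Theorem \ref{general} and restrict. The only cosmetic difference is that the paper fixes the hub-path length to $\varepsilon\log_q(np)$ and reads off the resulting degree bound $\lceil 2(1-\varepsilon)/\varepsilon\rceil+2$, whereas you fix the hub degree $D$ first and let the path length be $\lceil k/(D-2)\rceil$; these are two parametrisations of the same construction.
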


\begin{proof}
 Since $F$ has $(1-\varepsilon)2\log_q(np)$ vertices, it also has at most this many components. We construct a tree $T$ as follows: We take a path of length $\varepsilon\log_q(np)$ and to each vertex we attach with additional edges a nearly equal number of components of $F$, meaning that each vertex in the path will have degree at most $\lceil \frac{2(1-\varepsilon)}{\varepsilon}\rceil+2$. Now, since $T$ has bounded degree and $(1-\varepsilon/2)2\log_q(np)$ vertices, we use Theorem \ref{general} to get that there is an induced copy of $T$ in $G_{n,p}$ with high probability, but since $T$ contains $F$ as an induced subgraph, we are done.
\end{proof}
Forests which are also of particular interest are matchings. In the case where $p$ is constant, it is known that the number of vertices of a largest induced matching in $G_{n,p}$ is concentrated in two values which are asymptotically equal to $2\log_q(np)$ \cite{clark2001strong}. Theorem \ref{Thm:3} shows that a largest induced matching of $G_{n,p}$ has asymptotically at least $2\log_q(np)$ vertices, when $n^{-1/2}\ln^{10/9}n<p<0.99$.

\section{Proof of Theorem \ref{Main Thm}}

Let $c=c(n)=pn$ and $h=\frac{3\ln\ln (np)}{\ln (np)}$, so that $b=(2-h)\log_q(np)$. 

We will actually prove the theorem for $b:=(2-h)\frac{\ln c}{c}n$. This implies the validity of the theorem in its original form, as $(2-h)\frac{\ln c}{c}n>(2-h)\log_q(np)$ because:
\begin{align*}
    \log_q(np)=\frac{\ln c}{\ln (1-p)^{-1}}=\frac{\ln c}{-\ln (1-c/n)}< \frac{\ln c}{c}n,
\end{align*}
 which means that we can even have a larger tree\footnote{For the proof of Theorem \ref{general} when $p$ is constant, one has to work with $\log_q(np)$, as this is asymptotically not the same as $\frac{\ln c}{c}n$.}.

The proof technique we are going to use is the second moment method. For smaller $p$ it is known that the problem of determining the largest independent set in the random graph is not approachable by vanilla second moment calculations, as the lower bound on the probability of the existence of independent sets of size close to $2\log_q(np)$ is exponentially small. Nevertheless, Dani and Moore \cite{dani2011independent} determined the asymptotically optimal result for large independent sets, by using the second moment in a clever non-standard way, by assigning weights to the edges. It might be that their approach could yield results similar to what we prove, but for smaller $p$.
\subsection{The Second Moment Method}

Label the vertices of $T$ with numbers from $1$ to $b$.
Also label the vertices of $G_{n,p}$ with numbers from $1$ to $n$.
In the text that follows we will identify the vertices of the graphs with their labels.\\
Let $L=\{\Phi_i:i\in I\}$ be the set of all injective functions from $\{1,2,...,b\}$ to $\{1,2,...,n\}$. Let $X_i$ be the indicator random variable for the event that the ordered set of vertices $(\Phi_i(1),...,\Phi_i(b))$ induces a tree isomorphic to $T$, where the isomorphism preserves the order of the vertices. In other words, this is the event that $\{\Phi_i(u),\Phi_i(v)\}$ is an edge in $G_{n,p}$ if and only if $\{u,v\}$ is an edge in $T$, for every two vertices $u,v$ in $T$. 
Let $A_i=Im(\Phi_i)$, meaning that $A_i$ is the set of vertices in the image of $\Phi_i$. We also define $T_i$ to be the tree induced by vertices in $A_i$ if $X_i=1$.\\

Let $X$ be the sum of all $X_i$. Our goal is to prove that $X>0$ with probability $1-o(1)$ when n tends to infinity. In order to do this we will use Chebyshev's Inequality:
\begin{equation*}
    Pr(X=0)\leq \frac{Var(X)}{E[X]^2}\leq \frac{1}{E[X]}+
    \sum_i\sum_{j:2\leq |A_i\cap A_j|\leq b}\frac{E[X_iX_j]-E[X_i]E[X_j]}{E[X]^2}.
\end{equation*}
Note that if $|A_i\cap A_j|<2$ then $X_i$ and $X_j$ are independent, and therefore $E[X_iX_j]-E[X_i]E[X_j]=0$.\\
Note also that 

  \begin{align*}
       \sum_i\sum_{j:2\leq |A_i\cap A_j|\leq b}&\frac{E[X_iX_j]-E[X_i]E[X_j]}{E[X]^2}=  \\
       &\sum_i\sum_{j:2\leq |A_1\cap A_j|\leq b}\frac{E[X_1X_j]-E[X_1]E[X_j]}{E[X]^2}.
  \end{align*} 
  Since $E[X_1X_j]=E[X_j|X_1=1]E[X_1]$ we get that:
  \begin{align*}
      Pr(X=0)&\leq \frac{1}{E[X]}+\sum_i\sum_{j:2\leq |A_1\cap A_j|\leq b}\frac{E[X_j|X_1=1]E[X_1]-E[X_1]E[X_j]}{E[X]^2}\\
      &=\frac{1}{E[X]}+\sum_{j:2\leq |A_1\cap A_j|\leq b}\sum_i\frac{E[X_1](E[X_j|X_1=1]-E[X_j])}{E[X]^2}\\
      &=\frac{1}{E[X]}+\sum_{\ell=2}^{b}\sum_{|A_1\cap A_j|=\ell}\frac{E[X_j|X_1=1]-E[X_j]}{E[X]}.
  \end{align*}

In the rest of the paper we will prove that the last expression tends to $0$ as $n$ tends to infinity.\\

Observe that from linearity of expectation we have
\begin{equation}\label{Eq:1}
E[X]=\frac{n!}{(n-b)!}p^{b-1}(1-p)^{\binom{b-1}{2}},
\end{equation}
as there are $b-1$ edges that have to be present in a tree induced by a fixed ordered set of vertices, and the rest have to be non-edges. In Section 4 we prove that $E[X]$ tends to infinity.
\newtheorem{Claim}{Claim}[section]
\begin{Claim}\label{Cl:1}
$\lim_{n\rightarrow \infty}E[X]=\infty$.
\end{Claim}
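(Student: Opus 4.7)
The plan is to take the logarithm of $E[X]$ in \eqref{Eq:1} and show it tends to infinity. I would write
\begin{equation*}
    \ln E[X]=\sum_{i=0}^{b-1}\ln(n-i)+(b-1)\ln p+\binom{b-1}{2}\ln(1-p),
\end{equation*}
and expand the first sum via $\ln(n-i)=\ln n-i/n+O(i^2/n^2)$, use $\ln p=\ln c-\ln n$ in the second, and apply $\ln(1-p)=-p+O(p^2)$ in the third (valid since $p\leq 1/\ln n$ is bounded away from $1$). The $b\ln n$ and $(b-1)\ln n$ contributions cancel up to a negligible $\ln n$ remainder, and after noting that the stray $b(b-1)/(2n)$ from the first sum is a factor of $1/c=o(1)$ smaller than $\binom{b-1}{2}p$, this collapses to
\begin{equation*}
    \ln E[X]=(b-1)\ln c-\tfrac{(b-1)(b-2)}{2}\,p\,(1+o(1))+O(b^2p^2)+O(\ln n).
\end{equation*}

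Next I would use the definition $b=(2-h)\tfrac{n\ln c}{c}$, which gives the convenient identity $bp=(2-h)\ln c$. Factoring the two main terms yields
\begin{equation*}
    (b-1)\ln c-\tfrac{(b-1)(b-2)}{2}p=b\ln c\left(1-\tfrac{bp}{2\ln c}\right)(1+o(1))=\tfrac{bh\ln c}{2}(1+o(1)),
\end{equation*}
and substituting $h=\tfrac{3\ln\ln c}{\ln c}$ converts this to the explicit asymptotic
\begin{equation*}
    \tfrac{bh\ln c}{2}=\tfrac{3(2-h)}{2}\cdot\tfrac{n\ln c\,\ln\ln c}{c}.
\end{equation*}
Since $c\to\infty$ (from $p\geq n^{-1/2}\ln^{10/9}n$) and $n/c=1/p\geq\ln n\to\infty$ (from $p\leq 1/\ln n$), this main term tends to infinity.

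The point requiring care is verifying that the error $O(b^2p^2)=O(\ln^2 c)$ is indeed dominated by the main term $\Theta(n\ln c\ln\ln c/c)$. Their ratio is $\tfrac{c\ln c}{n\ln\ln c}$, which the upper bound $p<1/\ln n$ controls by $\tfrac{\ln c}{\ln n\cdot\ln\ln c}=o(1)$; this is essentially why the regime $p\leq 1/\ln n$ appears in Theorem \ref{Main Thm}. The remaining error terms ($b^2/n$, the higher-order $b^3/n^2$, and the leftover $\ln n$) are each smaller than $b^2 p$ by further factors of $c$ or $b/n$, so they are harmless. Putting everything together gives $\ln E[X]\to\infty$ and hence $E[X]\to\infty$, which is Claim \ref{Cl:1}.
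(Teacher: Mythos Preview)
Your proof is correct and follows essentially the same route as the paper: both expand \eqref{Eq:1} via the standard approximations and isolate the dominant contribution $\tfrac{bh\ln c}{2}=\tfrac{3b\ln\ln c}{2}$, then observe that the remaining terms are of lower order. The only cosmetic difference is that the paper applies Stirling's formula and works multiplicatively to reach $e^{-b^2/(n-b)}(\ln c)^{3b/2}$, whereas you take logarithms from the outset and track the errors additively; your treatment of the $O(b^2p^2)$ error (and the role of $p<1/\ln n$ in controlling it) is in fact more explicit than the paper's.
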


Now it is clear that if we prove $$
H=\sum_{\ell=2}^{b}\sum_{|A_1\cap A_j|=\ell} \frac{E[X_j|X_1=1]-E[X_j]}{E[X]}=o(1)$$
we would be done. We will actually prove that 
\begin{equation}\label{Eq:2}
H\leq H_1:=\sum_{\ell=2}^{b}\sum_{|A_1\cap A_j|=\ell} \frac{E[X_j|X_1=1]}{E[X]}=o(1).
\end{equation}

\subsection{The Counting Framework}
In this subsection we will classify $\Phi_i$'s for which the term ${E[X_j|X_1=1]}$ in the last expression is non-zero.

\begin{definition}
Let $i\in I$ and $A=A_1\cap A_i$. If it holds that $\{\Phi^{-1}_1(x),\Phi^{-1}_1(y)\}$ is an edge in $T$ if and only if $\{\Phi^{-1}_i(x),\Phi^{-1}_i(y)\}$ is an edge in $T$ for every two $x,y\in A$, then  $\Phi_i$ is \emph{compatible} with  $\Phi_1$.
\end{definition}
Note that if
$\Phi_i$ and $\Phi_1$ are not compatible, then $E[X_i|X_1=1]=0$. 

\begin{definition}
Define $S(\ell,k)$ to be the number of functions $\Phi_i\in L$ compatible with $\Phi_1$ such that $A=A_1\cap A_i$ has $\ell$ vertices and the subgraph of $T$ induced by vertices in $\Phi^{-1}[A]$ consists of a forest with $k$ connected components.
\end{definition}

For each of the mentioned $S(\ell,k)$ functions $\Phi_j$, we have that the following holds
\begin{equation}\label{Eq:3}
    E[X_j|X_1=1]=Pr[X_j=1|X_1=1]=p^{b-1-(\ell-k)}(1-p)^
    {\binom{b-1}{2}-{\binom {\ell}{2}}+(\ell-k)}.
\end{equation}
This is easy to see as we just have to have in mind that the vertices in $A$ already span a fixed graph, because $X_1=1$.\par
One simple observation of cardinal importance is the following. For a fixed $\ell<b$, if $k>(b-\ell)d$ then $S(\ell,k)= 0$. To see this, suppose that there exists at least one function $\Phi_i$ which would be a counterexample. Observe that each of the $k$ mentioned connected components in the graph is connected with an edge to at least one vertex of the remaining $b-\ell$ vertices in $T$. Since each vertex has degree at most $d$ there can be at most $(b-\ell)d$ of those components, and therefore $k\leq(b-\ell)d$. Also note that the number of components cannot be larger than the number of vertices $l$. \\In the case when $b=\ell$, we have that $k=1$. Otherwise $S(b,k)=0$.\\
\par Define 

\[ k_{\ell}=\begin{cases} 
      1 & \ell=b \\
      \min\{\ell,(b-\ell)d\} & \ell<b \\
   \end{cases}
\]
From the previous discussion we get that $S(\ell,k)=0$ whenever $k>k_{\ell}$.
Combining (\ref{Eq:1}),(\ref{Eq:2}) and (\ref{Eq:3}) we get that it is enough to show that $\Tilde{H}=o(1)$ where
\begin{equation}\label{Eq:4}
    H_1\leq \Tilde{H}:=\sum_{\ell=2}^{b}\sum_{k=1}^{k_{\ell}}\frac{S(\ell,k)}{(n)_b}p^{-(\ell-k)}(1-p)^{-{\ell \choose 2}+\ell-k},
\end{equation}
where we use the notation $(r)_t={r \choose t}t!$. Next we want to estimate $S(\ell,k)$.

\subsection{Estimating $S(\ell,k)$}
\par
We will prove that
$$S(\ell,k)<(n-b)_{b-\ell}{b \choose k}^2k!{k+\ell-1 \choose \ell}d^{\ell},$$
where $d=2^{2\Delta}\Delta!.$\\

\par 
 In order to choose a compatible function $\Phi_i$ for which $A=A_1\cap A_i$ is of size $\ell$ and the subgraph of $T$ induced by $\Phi_i^{-1}[A]$ has $k$ connected components, we first choose one vertex for each of the $k$ components which has the lowest label in $T$ -- we call them \textit{roots}. This can be done in ${b \choose k}$ ways. The total number of ways to decide where each of these vertices is mapped by $\Phi_i$ is ${b \choose k}k!$. So the number of ways to choose the roots and to choose where they are mapped is ${b \choose k}^2k!$. It is important to remember that $A_1$ is fixed.
\par
Now we count the number of ways to choose and map the remaining vertices in $\Phi_i^{-1}[A]$. \\
First note that there are less than ${k+\ell-1 \choose \ell}$ ways to choose how many vertices each of the $k$ components will have.
 Each vertex in $T$ has at most $\Delta$ neighbours, so for each root there are at most $2^\Delta$ ways to choose which neighbours of the root in $T$ will map to $A$. There are also at most $2^\Delta$ ways to choose the neighbours of a root\footnote{Recall that $T_i$ is the tree induced by vertices in $A_i$ if $X_i=1$.} in $T_1$ and at most $\Delta!$ ways to choose which neighbours correspond to each other. Now by induction, we get that for a component in $T$ of size $s$ and a fixed root, there are at most $(2^{2\Delta}\Delta!)^s$ choices. Therefore, the upper bound on the total number of ways to choose the vertices in the intersection is $(2^{2\Delta}\Delta!)^\ell$.
\par 
To complete the choice of $\Phi_i$, we have to map the remaining $b-\ell$ vertices from $T$. This can be done in $(n-b)_{b-\ell}$ ways. From the discussion above we get that $S(\ell,k)$ is bounded by the product
\begin{equation*}
    S(\ell,k)\leq(n-b)_{b-\ell}{b \choose k}^2k!{k+\ell-1 \choose \ell}(2^{2\Delta}\Delta!)^\ell.
\end{equation*}
Now we get that $\Tilde{H}$ from Equation (\ref{Eq:4}) is upper bounded by
\begin{equation}\label{Eq:5}
 \sum_{\ell=2}^{b} \frac{(n-b)_{b-\ell}}{(n)_{b}}p^{-\ell}(1-p)^{\ell-{\ell \choose 2}}\sum_{k=1}^{k_{\ell}}{b \choose k}^2k!{k+\ell-1 \choose \ell}d^{\ell}\Big(\frac{p}{1-p}\Big)^k.
\end{equation}

To summarize: we used Chebyshev's inequality to give an upper bound on the probability that $T$ is an induced subgraph in $G_{n,p}$. The trick was to carefully count the number of ways in which two trees can intersect if they are induced subgraphs. Now it is enough to prove that expression (\ref{Eq:5}) tends to $0$ when $n$ goes to infinity. We prove this in the next section.

\section{Proof of convergence of expression (\ref{Eq:5})}
First we will state some auxiliary results, some of which we prove in Section $4$. We write the proofs separately, as these simple calculus exercises would otherwise interfere with the natural flow of this paper (although we admit that the rest of the proof consists only of tedious calculations).
\subsection{Auxiliary Statements}
First we will examine the second sum in expression (\ref{Eq:5}). Let 
\begin{equation}
    f(k)=f_\ell(k):={b \choose k}^2k!{k+\ell-1 \choose \ell}d^{\ell}\Big(\frac{p}{1-p}\Big)^k.
\end{equation}

\begin{Claim}[2]\label{Cl:2}
    The function $f(k)$ is maximized for $k=k_{\ell}$ when $1\leq k \leq k_{\ell}$. 
\end{Claim}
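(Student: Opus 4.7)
The plan is to show that $f$ is non-decreasing on $[1, k_\ell]$ by proving $f(k+1)/f(k) \ge 1$ for each $1 \le k < k_\ell$; this immediately forces the maximum to occur at $k = k_\ell$. Computing the ratio is routine: the $d^\ell$ factor cancels, $(p/(1-p))^k$ contributes a single $p/(1-p)$, and using $\binom{b}{k+1}/\binom{b}{k} = (b-k)/(k+1)$ together with $\binom{k+\ell}{\ell}/\binom{k+\ell-1}{\ell} = (k+\ell)/k$ gives
\[
\frac{f(k+1)}{f(k)} \;=\; \frac{(b-k)^2\,(k+\ell)}{k(k+1)} \cdot \frac{p}{1-p}.
\]

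Since $k \le k_\ell \le \ell$, we have $(k+\ell)/k \ge 2$, so the combinatorial factor is at least $2(b-k)^2/(k+1)$, and moreover $k+1 \le \ell + 1 \le b$ (the edge case $\ell = b$ forces $k_\ell = 1$ and the claim is vacuous). The core step is to bound $b-k$ from below by a constant times $b$, which I would handle by splitting on the two branches in the definition of $k_\ell$. If $k_\ell = \ell$, then $\ell \le (b-\ell)d$ rearranges to $\ell \le bd/(d+1)$, and hence $b-k \ge b-\ell \ge b/(d+1)$. If instead $k_\ell = (b-\ell)d < \ell$, then $b-\ell < \ell/d < b/(d+1)$, so $b - k \ge b - (b-\ell)d > b - bd/(d+1) = b/(d+1)$. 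Either way, $b - k > b/(d+1)$.

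Putting the two bounds together yields
\[
\frac{f(k+1)}{f(k)} \;\ge\; \frac{2b^2 p}{(d+1)^2(k+1)(1-p)} \;\ge\; \frac{2bp}{(d+1)^2(1-p)},
\]
and since $bp = (2-h)\ln c$ and $c = np \to \infty$ under the hypothesis $p \ge n^{-1/2}\ln^{10/9} n$, while $d = 2^{2\Delta}\Delta!$ depends only on the fixed $\Delta$, this bound tends to infinity and in particular exceeds $1$ for all sufficiently large $n$, uniformly in $k$ and $\ell$. I expect the only mildly delicate point to be the case split for $b - k$; once $b - k \gtrsim b$ is in hand the inequality is immediate, since $bp \to \infty$ absorbs the constant factor involving $\Delta$, and everything else reduces to routine binomial-coefficient manipulation.
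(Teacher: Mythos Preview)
Your proof is correct and follows essentially the same route as the paper: compute the ratio $f(k+1)/f(k)$, use the structural constraint $k \le k_\ell$ to force $b-k$ to be a positive fraction of $b$ (equivalently $k/b \le d/(d+1)$), and conclude via $bp \sim (2-h)\ln c \to \infty$. The only cosmetic difference is that the paper substitutes $k=\varepsilon b$ and argues by contradiction that $\varepsilon \ge 1-1/\sqrt{\ln c}$ would violate $\varepsilon \le d/(d+1)$, whereas you obtain $b-k > b/(d+1)$ directly by a case split on the two branches of $k_\ell$; both extract the same bound from the same constraint.
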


This brings us to the wanted simplification of the second sum:

\begin{lemma}\label{Lem:1}
    The following holds   $$\sum_{k=1}^{k_{\ell}}f(k)<k_{\ell}f(k_{\ell})$$.
\end{lemma}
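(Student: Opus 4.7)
The plan is to derive the lemma directly from Claim \ref{Cl:2}. That claim states that $f(k)$ is maximized at $k=k_{\ell}$ on $\{1,2,\dots,k_{\ell}\}$, so each of the $k_{\ell}$ summands in $\sum_{k=1}^{k_{\ell}} f(k)$ is at most $f(k_{\ell})$. Summing these bounds gives $\sum_{k=1}^{k_{\ell}} f(k)\leq k_{\ell} f(k_{\ell})$, which is the content of the lemma up to the question of strictness.

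To sharpen to strict inequality, I would strengthen Claim \ref{Cl:2} to the statement that $f$ is strictly monotonically increasing on $\{1,\dots,k_{\ell}\}$. This is presumably how Claim \ref{Cl:2} is proved in the first place: one computes the ratio $f(k+1)/f(k)$ using ${b \choose k+1}^2/{b \choose k}^2=((b-k)/(k+1))^2$ and ${k+\ell \choose \ell}/{k+\ell-1 \choose \ell}=(k+\ell)/k$, and verifies that the resulting expression exceeds $1$ throughout $1\leq k\leq k_{\ell}-1$. Once this is known, every term with $k<k_{\ell}$ is strictly below $f(k_{\ell})$, and the strict inequality follows whenever $k_{\ell}\geq 2$.

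The only real subtlety is the boundary case $k_{\ell}=1$, which occurs only for $\ell=b$: there the sum has a single term and the bound is tight. This lone value of $\ell$ contributes just one summand to the outer sum in (\ref{Eq:5}), and in that corner case the inequality should be read as $\leq$. In short, there is no hard step here: the lemma is a one-line pigeonhole-style bound once Claim \ref{Cl:2} has been established in its slightly stronger monotonic form, and the factor $k_{\ell}$ on the right-hand side is simply the number of terms being grouped together.
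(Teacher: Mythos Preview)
Your proposal is correct and matches the paper's approach: the paper treats Lemma~\ref{Lem:1} as an immediate consequence of Claim~\ref{Cl:2}, whose proof in fact establishes the strict monotonicity $f(k+1)/f(k)>1$ that you invoke. Your observation about the $k_\ell=1$ boundary case (occurring only at $\ell=b$) is a valid caveat that the paper glosses over; it is harmless since that single term is handled separately in Case~III anyway.
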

\noindent Now we turn to the first part of formula (\ref{Eq:5}).
\begin{Claim}[3]\label{Cl:3}
For $n$ large enough it holds that :
    $$\frac{(n-b)_{b-\ell}}{(n)_{b}}< \frac{6^\ell}{n^\ell}e^{-b^2/n}<\Big(\frac{6}{n}\Big)^\ell$$.
\end{Claim}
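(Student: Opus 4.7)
My plan is to expand both falling factorials and pair up their factors carefully, so that $\ell$ ``extra'' denominator factors produce the $1/n^\ell$ term, the remaining paired ratios produce (essentially) the $e^{-b^2/n}$ term, and any error is absorbed into the constant base $6^\ell$. Concretely, writing $(n)_b = \prod_{i=0}^{b-1}(n-i)$ and $(n-b)_{b-\ell}=\prod_{i=0}^{b-\ell-1}(n-b-i)$, I would pair the $i$-th numerator factor with $n-i$ in the denominator for $i=0,\dots,b-\ell-1$, leaving the $\ell$ denominator factors $n-(b-\ell),\dots,n-(b-1)$ unpaired. This yields the identity
\[
\frac{(n-b)_{b-\ell}}{(n)_b}=\prod_{i=0}^{b-\ell-1}\frac{n-b-i}{n-i}\cdot\frac{1}{\prod_{j=b-\ell}^{b-1}(n-j)}.
\]

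For the first product, each factor equals $1-\tfrac{b}{n-i}$, so the inequality $1-x\le e^{-x}$ gives an upper bound of $\exp\!\bigl(-\sum_{i=0}^{b-\ell-1}\tfrac{b}{n-i}\bigr)\le\exp\!\bigl(-b(b-\ell)/n\bigr)$, using $n-i\le n$. For the second product, each factor is at least $n-b+1$, and since $b=o(n)$ in our regime (recall $b\le 2n^{1/2}/\ln^{1/9}n$) one has $\tfrac{1}{n-b+1}\le\tfrac{1}{n}(1+2b/n)\le e^{2b/n}/n$, so the second product is at most $e^{2b\ell/n}/n^\ell$.

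Multiplying the two bounds, the exponents combine to $-b(b-\ell)/n+2b\ell/n=-b^2/n+3b\ell/n$, giving $\frac{(n-b)_{b-\ell}}{(n)_b}\le \frac{e^{3b\ell/n}}{n^\ell}\,e^{-b^2/n}$. The final step is to absorb $e^{3b\ell/n}$ into $6^\ell$, which reduces to checking $3b/n<\ln 6$; this holds for all sufficiently large $n$ since $b=o(n)$. The rightmost inequality of the claim is immediate from $e^{-b^2/n}<1$. The only mildly delicate point is that the natural pairing produces a positive $3b\ell/n$ correction to the exponent (going the wrong way), so one must verify that the base $6$ is large enough to absorb this correction uniformly in $\ell$; because $b$ is polynomially smaller than $n$, the absorption is comfortable and the constant $6$ is not tight.
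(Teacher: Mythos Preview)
Your argument is correct. The identity you write down by pairing factors is exact, the bound $1-x\le e^{-x}$ gives $\exp(-b(b-\ell)/n)$ for the paired product, the leftover $\ell$ denominator factors are each at least $n-b+1\ge n/e^{2b/n}$ for large $n$, and the combined exponent $-b^2/n+3b\ell/n$ is then absorbed into $6^\ell$ because $b=o(n)$. Nothing is missing.

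The paper takes a different route: it rewrites $(n-b)_{b-\ell}/(n)_b=(n-b)!^2/\bigl(n!\,(n-2b+\ell)!\bigr)$ and applies Stirling's approximation to each factorial, then massages the resulting expression through several algebraic regroupings to reach $(2e/n)^\ell e^{-b^2/n}$, finally relaxing $2e$ to $6$. Your approach is more elementary and more transparent: no Stirling, no asymptotic equivalences to track, and the roles of the $n^{-\ell}$ factor, the $e^{-b^2/n}$ factor, and the correction term are visible from the start. The paper's route has the minor advantage that the intermediate constant $2e$ is sharper, but since the claim only asks for $6$ this buys nothing here. Either argument suffices; yours is the cleaner of the two.
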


We continue with another simple lemma.
\begin{lemma}\label{Lem:2}
    For all integers $\ell$ where $1\leq \ell \leq b$ it holds that
    $$\frac{(n-b)_{b-\ell}}{(n)_{b}}p^{-\ell}(1-p)^{\ell-{\ell \choose 2}}<6^lc^{-\ell+(1+1/\ln n)c{{\ell \choose 2}}/(n\ln c)}.$$
\end{lemma}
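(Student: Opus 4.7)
The plan is to estimate the left-hand side in two stages: first handle the combinatorial quantity $\frac{(n-b)_{b-\ell}}{(n)_b}p^{-\ell}$ via Claim \ref{Cl:3}, and then control the factor $(1-p)^{\ell - \binom{\ell}{2}}$ via the Taylor expansion of $-\ln(1-p)$.

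For the first stage, Claim \ref{Cl:3} gives $\frac{(n-b)_{b-\ell}}{(n)_b} < (6/n)^\ell$, and since $p=c/n$ this yields $\frac{(n-b)_{b-\ell}}{(n)_b}p^{-\ell} < (6/c)^\ell$. After pulling out the factor $6^\ell$ on the right-hand side of the target inequality, it therefore suffices to prove
\[(1-p)^{\ell - \binom{\ell}{2}} \leq c^{(1+1/\ln n)c\binom{\ell}{2}/(n\ln c)}.\]
For $\ell\in\{1,2\}$ the exponent on the left is nonnegative, so the left side is at most $1$ while the right side is at least $1$, and the inequality is immediate.

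For $\ell\geq 3$, taking logarithms reduces matters to
\[\Big(\binom{\ell}{2} - \ell\Big)(-\ln(1-p)) \;\leq\; (1+1/\ln n)\,\binom{\ell}{2}\cdot\frac{c}{n},\]
where both sides are nonnegative. The key estimate, which I will verify directly from the power series, is
\[-\ln(1-p) \;=\; p + \tfrac{p^2}{2} + \tfrac{p^3}{3} + \cdots \;\leq\; p + \tfrac{p^2}{2(1-p)} \;\leq\; p + p^2 \;\leq\; p\Big(1+\tfrac{1}{\ln n}\Big),\]
where the second inequality uses $p\leq 1/2$ and the third uses the hypothesis $p\leq 1/\ln n$. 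Substituting $p=c/n$ and using the trivial bound $\binom{\ell}{2} - \ell \leq \binom{\ell}{2}$ then closes the inequality.

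The main obstacle is sharpness in this Taylor estimate: any slackness worse than a multiplicative $(1+1/\ln n)$ factor would compound through the subsequent estimates of $\tilde H$ in Equation (\ref{Eq:5}), and this is precisely where the hypothesis $p<1/\ln n$ is indispensable. Beyond this one calculus input, the argument reduces to a bookkeeping exercise of rewriting everything in terms of $c$ and the ratio $c/n=p$.
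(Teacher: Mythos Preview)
Your proof is correct and follows essentially the same route as the paper: both invoke Claim~\ref{Cl:3} for the combinatorial ratio and bound $-\ln(1-p)\le p+p^2\le p(1+1/\ln n)$ via the power series, using the hypothesis $p<1/\ln n$. The paper avoids your case split on $\ell\le 2$ by first dropping the factor $(1-p)^\ell\le 1$, which is equivalent to your step $\binom{\ell}{2}-\ell\le\binom{\ell}{2}$ but handles all $\ell$ uniformly.
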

\begin{proof}
Let us estimate $p^{-\ell}(1-p)^{\ell-{\ell \choose 2}}$. 
\begin{align*}
    p^{-\ell}(1-p)^{\ell-{\ell \choose 2}}&<\Big(\frac{n}{c}\Big)^\ell(1-c/n)^{-{\ell \choose 2}}\\
    &<\Big(\frac{n}{c}\Big)^\ell e^{{\ell \choose 2}(\frac{c}{n}+\frac{c^2}{n^2})}\\
    &=\Big(\frac{n}{c}\Big)^\ell c^{{\ell \choose 2}(1+c/n)c{}/(n\ln c)}.\\
    &\leq  \Big(\frac{n}{c}\Big)^\ell c^{{\ell \choose 2}(1+1/\ln n)c/(n\ln c)}.
\end{align*}
The statement follows from Claim \ref{Cl:3}.
\end{proof}

\begin{corollary} \label{Co:1}
    For all integers $\ell$ where $1\leq \ell \leq b$ it holds that
    $$\frac{(n-b)_{b-\ell}}{(n)_{b}}p^{-\ell}(1-p)^{\ell-{\ell \choose 2}}<6^lc^{-(1-o(1))hl/2}.$$
\end{corollary}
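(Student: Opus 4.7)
The plan is to derive the corollary directly from Lemma \ref{Lem:2} by simplifying the exponent of $c$ in the bound it provides. Lemma \ref{Lem:2} already gives
\[ \frac{(n-b)_{b-\ell}}{(n)_b}p^{-\ell}(1-p)^{\ell-\binom{\ell}{2}}<6^\ell c^{-\ell+(1+1/\ln n)c\binom{\ell}{2}/(n\ln c)}, \]
so the task reduces to showing that the exponent $-\ell+(1+1/\ln n)c\binom{\ell}{2}/(n\ln c)$ is at most $-(1-o(1))h\ell/2$.

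To do this, I would first use the hypothesis $\ell\leq b$ to estimate $\binom{\ell}{2}=\ell(\ell-1)/2\leq \ell b/2$, and substitute the explicit expression $b=(2-h)\frac{\ln c}{c}n$. The cross terms cancel neatly: the second term in the exponent becomes
\[ (1+1/\ln n)\cdot\frac{c}{n\ln c}\cdot\frac{\ell(2-h)n\ln c}{2c}=(1+1/\ln n)\frac{(2-h)\ell}{2}. \]
Combining with the $-\ell$ in front, the exponent of $c$ is bounded by
\[ -\ell+\left(1+\tfrac{1}{\ln n}\right)\frac{(2-h)\ell}{2}=-\frac{h\ell}{2}+\frac{(2-h)\ell}{2\ln n}. \]

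The only real content left is to verify that the error term $(2-h)/(2\ln n)$ is $o(h/2)$, equivalently $1/\ln n=o(h)$. This is the step I expect to be the only place where the hypothesis $p\geq n^{-1/2}\ln^{10/9}n$ is used: it ensures $np\geq n^{1/2}\ln^{10/9}n$, so $\ln(np)=\Theta(\ln n)$ and $\ln\ln(np)=\Theta(\ln\ln n)$; hence $h=3\ln\ln(np)/\ln(np)=\Theta(\ln\ln n/\ln n)$, which dominates $1/\ln n$ by a factor $\ln\ln n\to\infty$. Therefore $(2-h)\ell/(2\ln n)=o(1)\cdot h\ell/2$, and the exponent of $c$ is at most $-(1-o(1))h\ell/2$, proving the corollary. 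No genuine obstacle arises; the proof is a one-line calculation followed by this lower bound on $h$.
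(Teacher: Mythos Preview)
Your proposal is correct and follows exactly the same route as the paper: start from Lemma~\ref{Lem:2}, bound $\binom{\ell}{2}\le \ell b/2$, substitute $b=(2-h)\frac{\ln c}{c}n$, and reduce the exponent to $-h\ell/2+\frac{(1-h/2)\ell}{\ln n}$. The only difference is that you spell out why $1/\ln n=o(h)$ (via $\ln(np)=\Theta(\ln n)$ from the lower bound on $p$), whereas the paper simply asserts the final $-(1-o(1))h\ell/2$ without comment.
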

\begin{proof}
    The proof follows from the previous lemma and the fact that\\ $\ell\leq b=(2-h)n\ln c/c$. To see this we just bound the exponent in the bound from the lemma.
    \begin{align*}
        &-\ell+(1+1/\ln n)c{{\ell \choose 2}}/(n\ln c)\\
        <&-\ell+(1+1/\ln n)cb\ell/(2n\ln c)\\
        =&-\ell + (1+1/\ln n)(1-h/2)\ell\\
        =&-hl/2+\frac{1-h/2}{\ln n}\ell\\
        =&-hl/2 (1-o(1)).
    \end{align*}
\end{proof}
\subsection{Finishing the proof}
\begin{lemma}
The expression 
\begin{equation*}
 \Tilde{H}=\sum_{\ell=2}^{b} \frac{(n-b)_{b-\ell}}{(n)_{b}}p^{-\ell}(1-p)^{\ell-{\ell \choose 2}}\sum_{k=1}^{k_{\ell}}{b \choose k}^2k!{k+\ell-1 \choose \ell}d^{\ell}\Big(\frac{p}{1-p}\Big)^k
\end{equation*}
converges to $0$ as n tends to infinity.
\end{lemma}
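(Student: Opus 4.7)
The plan is to collapse the double sum $\tilde{H}$ to a single sum over $\ell$ using Lemma~\ref{Lem:1}, and then estimate that sum by separating it into two regimes dictated by the formula $k_\ell=\min\{\ell,(b-\ell)d\}$. After the reduction,
\begin{equation*}
\tilde H \leq \sum_{\ell=2}^{b}\frac{(n-b)_{b-\ell}}{(n)_b}\,p^{-\ell}(1-p)^{\ell-\binom{\ell}{2}}\cdot k_\ell\,f(k_\ell),
\end{equation*}
and for the prefactor I would retain the unsimplified Lemma~\ref{Lem:2}, since its exponent of $c$, namely $-\ell+(1+o(1))c\binom{\ell}{2}/(n\ln c)$, is sharper than Corollary~\ref{Co:1} for small $\ell$ but reduces to it when $\ell$ approaches $b$.

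In Case A ($2\leq \ell\leq bd/(d+1)$, so $k_\ell=\ell$) I would use the estimates $\binom{b}{\ell}^2\ell!\leq b^{2\ell}/\ell!$, $\binom{2\ell-1}{\ell}\leq 4^\ell$, and $p/(1-p)\leq 2p$, which give $f(\ell)\leq (Cb^2p)^\ell/\ell!$ for an absolute constant $C=C(\Delta)$. Combined with Lemma~\ref{Lem:2} the summand is at most
\begin{equation*}
\frac{\ell}{\ell!}\,\bigl(Cb^2/n\bigr)^\ell\, e^{(1+o(1))p\binom{\ell}{2}}.
\end{equation*}
The hypothesis $p\geq n^{-1/2}\ln^{10/9}n$ forces $b^2/n=O(n\ln^2 c/c^2)=O(\ln^{-2/9}n)=o(1)$, so $(Cb^2/n)^\ell$ is geometric with tiny base. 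A short calculus check shows that the logarithmic derivative of the summand in $\ell$ stays negative over the whole range (the $p\ell$ term never overcomes $\ln\ell - \ln(Cb^2/n)$, thanks to the $10/9$ margin), so the summand is maximized at a moderate $\ell^\ast$ where it is already $o(1/b)$; summing over $\ell$ thus gives $o(1)$.

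In Case B ($bd/(d+1)<\ell\leq b$, so $k_\ell=(b-\ell)d$) I would set $m=b-\ell$ and bound $\binom{b}{md}\leq(eb/(md))^{md}$, $(md)!\leq(md)^{md}$, and $\binom{md+\ell-1}{\ell}\leq 2^{md+\ell}$. Using Corollary~\ref{Co:1} for the prefactor, each summand reduces to a modest polynomial in $b,m$ times $\bigl(6d/(\ln c)^{3/2}\bigr)^{b(1-o(1))}$. Since $\Delta$ is fixed, $(\ln c)^{3/2}>6d$ for $n$ large, so this factor is super-polynomially small in $b$; summing over the $O(b)$ values of $m$ still yields $o(1)$.

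The main obstacle is the bookkeeping near the two transitions: $\ell$ close to $bd/(d+1)$ in Case A, where the bound $(1-p)^{-\binom{\ell}{2}}$ swells to a power of $c$, and $\ell$ close to $b$ in Case B, where the per-$\ell$ saving drops from a polynomial power of $c$ to only the polylogarithmic $(\ln c)^{3/2}$. The $10/9$ exponent in the hypothesis $p\geq n^{-1/2}\ln^{10/9}n$ is precisely what keeps the dominant $\ln\ln n$ contributions on the correct side of the balance in both regimes; any weaker lower bound on $p$ would flip a sign in Case A.
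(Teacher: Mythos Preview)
Your plan coincides with the paper's --- same case split on $k_\ell$, same use of Lemma~\ref{Lem:1}, same choice between Lemma~\ref{Lem:2} and Corollary~\ref{Co:1} --- but the execution in both cases has a real gap.

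In Case~A the central calculus claim is false. At $\ell=bd/(d+1)$ one computes $p\ell=(2-h)\frac{d}{d+1}\ln c$, while $\ln\ell-\ln(Cb^{2}/n)=\ln c-\ln\ln c+O(1)$; since $d=2^{2\Delta}\Delta!\ge 32$ already for $\Delta=2$, the coefficient $(2-h)d/(d+1)$ is close to $2$ and the ``$p\ell$ term'' \emph{does} overcome $\ln\ell-\ln(Cb^{2}/n)$. So your log-derivative changes sign on the range, and the summand is not monotone. Nor is the endpoint value $o(1/b)$: at $\ell=2$ your bound is of order $(b^{2}/n)^{2}$, i.e.\ roughly $(\ln n)^{-4/9}$, which is far larger than $1/b$ when $b$ is of order $n^{1/2}$. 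Your bound \emph{is} in fact summable --- its $\ell$-th root, roughly $(Ceb^{2}/(n\ell))e^{p\ell/2}$, has a single minimum at $\ell=2/p$ and is $o(1)$ at both endpoints, hence uniformly $o(1)$, yielding a geometric series --- but that is a different argument from the one you wrote. The paper reaches the same conclusion instead via the parametrisation $\ell=c^{-t}b$ and a delicate analysis (its inequality~(\ref{Eq:9})--(\ref{Eq:10})) showing each summand is at most $(2Q/(\ln c)^{0.001})^{\ell}$.

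In Case~B, the factor you describe as ``a modest polynomial in $b,m$'' is not one: at $m=b/(d+1)$ we have $md\approx b$ and $b^{2}p/(md)\approx bp\approx 2\ln c$, so $(4e^{2}b^{2}p/(md))^{md}$ is of order $(\ln c)^{b}$, exponential in $b$. The case is rescued only because this $(\ln c)^{md}$ is beaten by the $(\ln c)^{-(3/2)(1-o(1))\ell}$ coming from Corollary~\ref{Co:1}, with $\ell\ge bd/(d+1)\ge md$ forcing a net exponent of roughly $-\ell/2$. Your write-up hides precisely this cancellation; the paper's Case~I tracks the two competing powers of $\ln c$ explicitly.
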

\begin{proof}
In order to prove the lemma and finish the proof we will show that for each $\ell$ the summands are so small so that the whole sum will be $o(1)$.\\

With $g(\ell)$ we denote the $\ell$'th summand:
\begin{equation}
    g(\ell)=\frac{(n-b)_{b-\ell}}{(n)_{b}}p^{-\ell}(1-p)^{\ell-{\ell \choose 2}}\sum_{k=1}^{k_{\ell}}{b \choose k}^2k!{k+\ell-1 \choose \ell}d^{\ell}\Big(\frac{p}{1-p}\Big)^k.
\end{equation}
We will make a case distinction depending on what $k_{\ell}$ is.\\[5pt]
\textbf{Case I:} $k_{\ell}=(b-\ell)d$\\[5pt]

Using Lemma \ref{Lem:1} together with Corollary \ref{Co:1} we get an upper bound on the $\ell$'th summand:
\begin{equation}\label{Eq:8}
    g(\ell) <6^\ell c^{-(1-o(1))hl/2}k_{\ell} f(k_{\ell}).
\end{equation}
We start with the following observation (since $k_{\ell}\leq \ell$):\\
\begin{equation*}
    \ell\geq k_{\ell}=(b-\ell)d\implies \ell\geq \frac{d}{d+1}b.
\end{equation*}
Let $\ell=(1-\varepsilon)b$ for some $\varepsilon\leq\frac{1}{d+1}$. From this we get $k_{\ell}=\varepsilon bd$.
Using standard upper bounds for factorials and binomial coefficients we get from (\ref{Eq:8}):
\begin{align*}
   g(\ell)&<6^{(1-\varepsilon)b}c^{-(1-o(1))h(1-\varepsilon)b/2}k_\ell
    \Big(\frac{be}{k_{\ell}}\Big)^{2k_{\ell}}\Big(\frac{k_{\ell}}{e}\Big)^{k_{\ell}} \sqrt{4k_{\ell}\pi} 2^{2l}d^{\ell}p^{k_{\ell}}\frac{1}{(1-p)^{k_{\ell}}}.
\end{align*}
    After rearranging the terms and the use of $(1-p)^{-1}<2$ we get:
   \begin{align*}
       g(\ell)&<6^{(1-\varepsilon)b}c^{-(1-o(1))h(1-\varepsilon)b/2}k_\ell
    e^{k_{\ell}}\Big(\frac{b}{k_{\ell}}\Big)^{2k_{\ell}}k_{\ell}^{k_{\ell}} \sqrt{4k_{\ell}\pi} 2^{2l}d^{\ell}\Big(\frac{c}{n}\Big)^{k_{\ell}}2^{k_{\ell}}\\
    &=\Big(6^{(1-\varepsilon)b}k_\ell e^{k_\ell}\sqrt{4k_{\ell}\pi} 2^{2l}d^{\ell}2^{k_{\ell}}\Big)\Big(c^{-(1-o(1))h(1-\varepsilon)b/2}\Big(\frac{b}{k_{\ell}}\Big)^{2k_{\ell}}k_{\ell}^{k_{\ell}}\Big(\frac{c}{n}\Big)^{k_{\ell}}\Big)
   \end{align*} 
  Let $Q=Q(d)$ be a sufficiently large constant depending only on $d$ such that the expression in the first bracket is less than $Q^b$. It is easy to see that $Q$ exists, as $k_\ell,l\leq b$. This leads us to 
\begin{align*}
    g(\ell)<Q^bc^{-(1-o(1))h(1-\varepsilon)b/2}\Big(\frac{b}{k_{\ell}}\Big)^{2k_{\ell}}k_{\ell}^{k_{\ell}} \Big(\frac{c}{n}\Big)^{k_{\ell}}.
\end{align*} 
 Now we just plug in $k_{\ell}=\varepsilon bd$:
\begin{align*}
     g(\ell)&< Q^b c^{-(1-o(1))h(1-\varepsilon)b/2}\Big(\frac{1}{\varepsilon d}\Big)^{2\varepsilon bd}\Big(\frac{c\varepsilon bd}{n}\Big)^{\varepsilon bd}.
\end{align*}
Now note that $\Big(\frac{1}{\varepsilon }\Big)^\varepsilon<e$ (take logarithm of both sides). Therefore the third term in the last expression is also bounded by some function of $d$ to the power of $b$. Also note that $\varepsilon d<1$ and $b<2\frac{\ln c}{c}n$, so we get:
\begin{align*}
    g(\ell)<\Tilde{Q}^b c^{-(1-o(1))h(1-\varepsilon)b/2}(\ln c)^b.
\end{align*}
Now we plug in $h=3\frac{\ln\ln c}{\ln c}$ and get:
\begin{align*}
    g(\ell)<\Tilde{Q}^b (\ln c)^{-3(1-o(1))(1-\varepsilon)b/2} (\ln c)^b,
\end{align*}
where again $\Tilde{Q}=\Tilde{Q}(d)$ is a function depending only on $d$.
The middle term converges to zero much faster than the others go to infinity, so for $c$ large enough $g(\ell)$ goes to $0$ exponentially fast in terms of $b$, where the base of the exponential is of order $\ln c$. Therefore $g(\ell)=o(b^{-1})$ and since there are at most $b$ summands in this case, we get that this part of the sum contributes $o(1)$ to the whole sum.\\\\
\textbf{Case II:} $k_{\ell}=\ell$ and $\ell<b$\\
In this case we get from Lemma 1 and Lemma 2 that:
$$g(\ell)<6^\ell c^{-\ell+(1+1/\ln n)c{{\ell \choose 2}}/(n\ln c)}\ell {b \choose \ell}^2 \ell!
{\ell+\ell-1 \choose \ell}d^{\ell}p^\ell\Big(\frac{1}{1-p}\Big)^\ell$$
Let $Q$ be a sufficiently large constant depending only on $d$. Using the known estimates for binomial coefficients and factorials we get:
\begin{align*}
    g(\ell)&<6^\ell c^{-\ell+(1+1/\ln n)c{{\ell \choose 2}}/(n\ln c)} \ell \Big(\frac{be}{\ell}\Big)^{2l}\Big(\frac{\ell}{e}\Big)^{\ell}\sqrt{4l\pi } 2^\ell d^{\ell}\Big(\frac{c}{n}\Big)^l2^\ell\\
    &< Q^lc^{-\ell+(1+1/\ln n)c{{\ell \choose 2}}/(n\ln c)}\Big(\frac{b^2}{\ell}\Big)^\ell\Big(\frac{c}{n}\Big)^\ell.
\end{align*}
Now let $\ell=c^{-t} b$ for some $t$ where $0<t<\frac{\ln b}{\ln c}$. Using the bounds $c\geq n^{1/2}\ln^{10/9} n$ and $b<2\frac{\ln c}{c}n$ we get that $0<t<1-1.01\frac{\ln\ln c}{\ln c}$. Plugging this in gives:
\begin{align*}
    g(\ell)&<Q^\ell c^{-\ell+(1+1/\ln n)c{{\ell \choose 2}}/(n\ln c)}\Big(\frac{bc}{nc^{-t}}\Big)^{\ell}\\
    &<Q^\ell c^{-\ell+(1+1/\ln n)c{{\ell \choose 2}}/(n\ln c)}
    \Big(\frac{2\ln c}{c^{-t}}\Big)^{\ell}\\
    &=(2Q)^\ell(\ln c)^lc^{-\ell+(1+1/\ln n)c{{\ell \choose 2}}/(n\ln c)}
    c^{t\ell}\\
    &=(2Q)^\ell(\ln c)^lc^{-\ell+(1+1/\ln n)c{{\ell \choose 2}}/(n\ln c)+t\ell}.
\end{align*}

We will prove that: 
\begin{equation}\label{Eq:9}
    c^{-\ell+(1+1/\ln n)c{{\ell \choose 2}}/(n \ln c)+t\ell}<(\ln c)^{-1.001\ell}.
\end{equation}

This will be the end of this part of the proof, as $c$ is a function of $n$ going to infinity, so our sum is bounded by a geometric series which obviously converges to 0.

\par Now we prove that (\ref{Eq:9}) holds.
Denote the exponent on the LHS of (\ref{Eq:9}) with $A$. We have to prove that 
\begin{align*}
    &c^A<(\ln c)^{-1.001\ell}\\
    \iff & A\ln c<-1.001\ell\ln\ln c\\
    \iff & A<\frac{-1.001\ell\ln\ln c}{\ln c}.
\end{align*}

In order to do this, we first transform A:
\begin{align*}
    A=&-c^{-t}b+(1+1/\ln n)c\frac{c^{-t}b(\ell-1)}{2n\ln c}+tc^{-t}b\\
    <&-c^{-t}b+(1+1/\ln n)\frac{(2-h)c^{-t}\ell}{2}+tc^{-t}b\\
    =&-b\Big(c^{-t}-(1+1/\ln n)\frac{(2-h)c^{-2t}}{2}-tc^{-t}\Big)\\
    =&-\ell\Big(1-(1+1/\ln n)\frac{(2-h)c^{-t}}{2}-t\Big).
\end{align*}
So if we prove that 
\begin{equation}\label{Eq:10}
	1-(1+1/\ln n)\frac{2-h}{2}c^{-t}-t>\frac{1.001\ln\ln c}{\ln c}
\end{equation}
for $0<t<1-1.01\frac{\ln\ln c}{\ln c}$ we would be done. \\
Let $x=t-1$. Now (\ref{Eq:10}) transforms into
\begin{equation*}
 -x-(1+1/\ln n)(1-h/2)c^{-x-1}>\frac{1.001\ln\ln c}{\ln c}
\end{equation*}
for $-1<x<-1.01\frac{\ln\ln c}{\ln c}$. \\

We prove this inequality by analyzing the function
\begin{equation*}
    r(x)=-x-(1+1/\ln n)(1-h/2){c^{-x-1}},
\end{equation*}
where $c,n$ and $h$ are fixed. Its derivative is 
\begin{equation*}
    r'(x)=-1+(1+1/\ln n)\ln c(1-h/2){c^{-x-1}}.
\end{equation*}
Note that $r'(x)$ is a decreasing function on $\mathbb{R}$ and it has one zero. This means that $r(x)$ is increasing until a certain point and then decreases. Therefore $r(x)$ restricted to $(-1,-1.01\frac{\ln\ln c}{\ln c})$ attains its minimum in one of the boundary points of the interval, which in this case is the point $-1.01\frac{\ln\ln c}{\ln c}$. We conclude:
\begin{equation*}
   -x-(1+1/\ln n)(1-h/2)c^{-x-1}\geq  r(-1.01\frac{\ln\ln c}{\ln c})>   \frac{1.001\ln\ln c}{\ln c},
\end{equation*}

so we are done.\\[5 pt]
\textbf{Case III:} $k_{\ell}=1$ and $l=b$\\
Like in Case I we get:
\begin{align*}
    g(\ell)&<6^\ell c^{-(1-o(1))h\ell/2}f(k_{\ell})\\
&=6^b c^{-(1-o(1))hb/2}
{b \choose 1}^2{b\choose b}d^{b}\Big(\frac{p}{1-p}\Big),
\end{align*}
which obviously tends to zero, as the second term dominates in the expression like in Case I.
\end{proof}

\section{Proofs of Claims}

\begin{proof}(of Claim \ref{Cl:1})

 We estimate $E[X]$ using Stirling's approximation:
\begin{align*}
    E[X]&\sim \frac{(\frac{n}{e})^n\sqrt{2n\pi}}
    {(\frac{n-b}{e})^{n-b}\sqrt{2(n-b)\pi}}\Big(\frac{c}{n}\Big)^{b-1}(1-c/n)^{\frac{(b-1)(b-2)}{2}}\\
    &> (\frac{n}{n-b})^{n-b}(n/e)^b \Big(\frac{c}{n}\Big)^{b}(1-c/n)^{\frac{b^2}{2}}\\
     &\sim \Big(1+\frac{b}{n-b}\Big)^{n-b}(n/e)^b \Big(\frac{c}{n}\Big)^{b}e^{-\frac{cb^2}{2n}}\\
\end{align*}
 (and after using $(1+x)>e^{x-x^2}$)
 \begin{align*}
     &\sim e^{((n-b)\frac{b}{n-b}(1-\frac{b}{n-b}))}(n/e)^b \Big(\frac{c}{n}\Big)^{b}e^{-\frac{cb^2}{2n}}\\
     &= e^{b(1-\frac{b}{n-b})} \Big(\frac{c}{e}\Big)^{b}e^{-\frac{b(2-h)\ln c}{2}}\\
     &=e^{-b\frac{b}{n-b}} c^{b}c^{-\frac{b(2-h)}{2}}\\
     &=e^{-\frac{b^2}{n-b}}c^{hb/2}\\
     &=e^{-\frac{b^2}{n-b}}(\ln c)^{3b/2}\rightarrow \infty.
 \end{align*}
\end{proof}

\begin{proof}(of Claim \ref{Cl:2})

When $k_{\ell}=1$ this is trivial. Assume $k_{\ell}>1$.
\par
We will prove that the ratio of the outputs of the function for two consecutive values is greater than 1.

\begin{align*}
    \frac{f(k+1)}{f(k)} &=\frac{ (\frac{b!}{(k+1)!(b-k-1)!})^2 (k+1)!\frac{(k+\ell)!}{\ell!k!}d^{\ell}\Big(\frac{p}{1-p}\Big)^{k+1}}
    { (\frac{b!}{(k)!(b-k)!})^2 (k)!\frac{(k+\ell-1)!}{\ell!(k-1)!}d^{\ell}\Big(\frac{p}{1-p}\Big)^{k}}\\[10 pt]
    &=(\frac{b-k}{k+1})^2(k+1)\frac{k+\ell}{k}\frac{p}{1-p}\\
    &\geq \frac{(b-k)^2}{k}\frac{c}{n}.
\end{align*}
Now let $k=\varepsilon b$. We get
\begin{align*}
    \frac{f(k+1)}{f(k)} &\geq \frac{(1-\varepsilon)^2b}{\varepsilon}\frac{c}{n}\\
    &\geq \frac{(1-\varepsilon)^2}{\varepsilon}\frac{n\ln c}{c}\frac{c}{n}\\
    &=\frac{(1-\varepsilon)^2}{\varepsilon}\ln c.
\end{align*}
Suppose now that 
\begin{align}
    & \frac{(1-\varepsilon)^2}{\varepsilon}\ln c\leq1 \nonumber\\
    \implies& (1-\varepsilon)^2\leq\frac{1}{\ln c} \nonumber\\
    \implies& \varepsilon \geq 1-\sqrt{\frac{1}{\ln c}}.
\end{align}
We will show that this is not possible if we set $c$ large enough in the beginning.\\
To see this, we will look at our bound on $k$:
\begin{align*}
    &k\leq (b-\ell)d\\
    \implies& k\leq(b-k)d\\
    \implies& k\frac{d+1}{d}\leq b\\
    \implies& k\leq \frac{d}{d+1}b\\
    \implies& \varepsilon\leq \frac{d}{d+1}.
\end{align*}
From the two bounds for $\varepsilon$ we get:
\begin{equation*}
    1-\sqrt{\frac{1}{\ln c}}\leq\frac{d}{d+1}
\end{equation*}
which is not true for $c$ large enough. Therefore $f$ is increasing and is therefore maximized in $k_l$.
\end{proof}

\begin{proof}(of Claim \ref{Cl:3})

We will use Stirling's approximation for factoriels:
\begin{align*}
&\frac{(n-b)_{b-\ell}}{(n)_{b}}
=\frac{\frac{(n-b)!}{(n-2b+\ell)!}}{\frac{n!}{(n-b)!}}   
=\frac{(n-b)!^2}{n!(n-2b+\ell)!}\sim\frac{(\frac{n-b}{e})^{2(n-b)}}{(\frac{n}{e})^n(\frac{n-2b+\ell}{e})^{n-2b+\ell}}\\
&=e^\ell\Big(\frac{n-b}{n}\Big)^n \Big(\frac{n-b}{n-2b+\ell}\Big)^{n-2b} \Big(\frac{1}{n-2b+\ell}\Big)^\ell\\
&<e^\ell\Big(1-\frac{b}{n}\Big)^n\Big(\frac{n-b}{n-2b}\Big)^{n-2b}\Big(\frac{2}{n}\Big)^\ell\\
\end{align*}
(after regrouping some terms we get)
\begin{align*}
&=\Big(\frac{2e}{n}\Big)^\ell\Big(1-\frac{b}{n}\Big)^{2b}\Big(\Big(1-\frac{b}{n}\Big)\frac{n-b}{n-2b}\Big)^{n-2b}\\
&<\Big(\frac{2e}{n}\Big)^\ell e^{-2b^2/n} \Big(\frac{n-2b+b^2/n}{n-2b}\Big)^{n-2b}\\
&=\Big(\frac{2e}{n}\Big)^\ell e^{-2b^2/n} \Big(1+\frac{b^2/n}{n-2b}\Big)^{n-2b}\\
&<\Big(\frac{2e}{n}\Big)^\ell e^{-2b^2/n} e^{b^2/n}\\
&=\Big(\frac{2e}{n}\Big)^\ell e^{-b^2/n}\\
&<\Big(\frac{6}{n}\Big)^\ell e^{-b^2/n}.
\end{align*}
With the last step we assure that the inequality from the statement always holds, for $n$ large enough.
\end{proof}

\section{Concluding Remarks}
\newtheorem{question}{Question}
\newtheorem{conjecture}{Conjecture}
We proved that $G(n,p)$ with high probability contains any fixed induced tree of size $b=(1-o(1))2\log_q(np)$ and with constant maximum degree for $n^{-1/2}\ln^{10/9}n\leq p\leq 0.99$. The same question is still open for smaller $p$. Accordingly, we state the following conjecture. 

\begin{conjecture}\label{Con:1}
Let $\Delta\geq 2$ be an integer and $p=p(n)$ a function such that $\frac{C}{n}<p<0.99$ where $C=C(\Delta)$ is a sufficiently large constant depending only on $\Delta$.  Let $T$ be a tree with 
$b=(1-o(1))2\log_q(np)$ vertices and maximum degree $\Delta$. Then with high probability $G_{n,p}$ contains $T$ as an induced subgraph.
\end{conjecture}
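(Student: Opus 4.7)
My plan is to adapt the weighted second moment scheme of Dani and Moore \cite{dani2011independent} to the induced-tree setting. In the sparse regime $p = C/n$ the plain second moment method breaks down because the sum in expression (\ref{Eq:5}) controlling the variance is dominated by pairs of embeddings with intermediate intersection size, where the conditional probability $E[X_j \mid X_1 = 1]$ is exponentially larger than its unconditional counterpart. The Dani--Moore idea is to replace the indicator $X_i$ by a weighted version $Y_i = X_i \cdot w(\Phi_i)$ that penalises configurations sitting in atypical local environments, so that conditioning on $Y_1 = 1$ already explains away most of the boost one would otherwise gain from conditioning on $Y_j = 1$.

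First I would define $w(\Phi_i)$ as a product over the vertices in $A_i$ of a local factor that controls codegrees and degrees relative to the rest of the image, mirroring the Dani--Moore penalty but adapted to the structure of $T$. A crucial preliminary step is to verify $E[Y] \geq (1-o(1))\, E[X]$, so the weighting does not destroy too much of the first moment; the right $w$ should be close to $1$ with overwhelming probability on any fixed embedding. Second, I would redo the intersection counting of Section 2.2 with the weights in place, aiming to show that the contributions from intersection sizes $\ell$ in the intermediate range---where the unweighted calculation fails for $p = C/n$---are suppressed by factors that decay exponentially in $\ell$. For intersection sizes near the boundaries, namely $\ell = O(1)$ and $\ell$ close to $b$, the unweighted estimates of this paper already suffice, so these regimes need no new ideas.

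A complementary ingredient I would try in parallel is a hybrid \emph{approximate plus absorb} strategy: use the weighted second moment to produce an induced tree of size $(1-\varepsilon) \cdot 2 \log_q(np)$ for some slowly vanishing $\varepsilon$, and then extend it deterministically. Large induced forests and independent sets of near-optimal size are already known to exist with high probability in this regime, so one can try to splice short connecting paths between components of such a forest, using a random reservoir of vertices set aside at the start and then reshaping the resulting structure into the prescribed tree $T$. This kind of argument often succeeds precisely when a pure second-moment estimate just barely fails, and the bounded maximum degree of $T$ should make the reshaping step tractable.

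The main obstacle, I expect, is designing the weight $w(\Phi_i)$ so that it simultaneously cancels the variance contribution at every intermediate $\ell$ and stays close to $1$ in expectation. For independent sets the relevant local statistic is essentially the degree of each vertex into the outside, which factorises cleanly; for induced trees the enforced presence of $b-1$ edges creates genuine dependencies across the image that make the analogous factorisation far less clean. It would not surprise me if reaching $p = C/n$ requires a global rather than purely local weight---perhaps a partition-function-style weight coming from an auxiliary random-cluster or spin model on the image---and obtaining sharp control on the resulting weighted moments looks to be the real technical heart of any such proof.
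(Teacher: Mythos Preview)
The statement you are attempting is \emph{Conjecture~\ref{Con:1}}, and the paper explicitly labels it as an open problem: ``The same question is still open for smaller $p$. Accordingly, we state the following conjecture.'' There is no proof in the paper to compare your proposal against; the paper's results (Theorems~\ref{general} and~\ref{Main Thm}) only cover the range $p \geq n^{-1/2}\ln^{10/9} n$, and the conjecture is precisely about extending this down to $p = C/n$.

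What you have written is not a proof but a research programme, and you are candid about this yourself: you outline two possible strategies (a Dani--Moore weighted second moment, and an approximate-plus-absorb scheme), and then in your final paragraph you identify the central technical obstacle---designing a weight that simultaneously controls the variance at every intermediate intersection size while keeping the first moment intact---and concede that ``obtaining sharp control on the resulting weighted moments looks to be the real technical heart of any such proof.'' That is exactly right, and it is also exactly the gap: none of the actual work has been carried out. You have not specified the weight $w$, not verified $E[Y] \geq (1-o(1))E[X]$, not redone the intersection counting, and not shown that the absorb step can reshape an arbitrary large induced forest into a prescribed bounded-degree tree $T$. The paper itself gestures at the Dani--Moore approach in Section~2.1 (``It might be that their approach could yield results similar to what we prove, but for smaller $p$''), so your first idea is already flagged there as a plausible but unexecuted direction.

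In short: there is no proof to grade here, only a sensible but entirely speculative sketch of how one might begin attacking an open problem.
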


\par In \cite{dutta2018induced} it is proven that for $p>2/n$ and for any tree $T$ of size at least $2\log_q(c)+3$, $G_{n,p}$ does not contain $T$ as an induced subgraph with probability $1-o(1)$. Furthermore, if $p\geq n^{-1/2}(\ln n)^2$, they showed that the size of the largest induced path is concentrated in two values. 
\begin{question}
Can we show similar concentration bounds for arbitrary trees of bounded degree for $\frac{C}{n}<p<0.99$ ?
\end{question}

\par Theorem \ref{Thm:3} does not give the most precise $b$ we can hope for. The power of the method is limited by the condition of Theorem \ref{general} that the tree must have bounded degree. Theorem \ref{general} probably still holds if instead having a constant maximum degree, we allow the degree to tend to infinity in terms of $n$. As a consequence of this, we could also strengthen Theorem \ref{Thm:3}.
\begin{question}
How do Theorem \ref{general} and Theorem \ref{Main Thm} change if we loosen the condition that the maximum degree of the tree is bounded?
\end{question}
\par As mentioned in the introduction, the existence of large induced matchings is a corollary of Theorem \ref{Thm:3}. It would be of particular interest to determine the minimal number of induced matchings which cover $G_{n,p}$ with high probability:   
\begin{question}
What is the minimal number of induced matchings needed to cover $G_{n,p}$ with high probability?
\end{question}


\begin{thebibliography}{10}

\bibitem{clark2001strong}
Lane Clark.
\newblock The strong matching number of a random graph.
\newblock {\em Australasian Journal of Combinatorics}, 24:47--58, 2001.

\bibitem{dani2011independent}
Varsha Dani and Cristopher Moore.
\newblock Independent sets in random graphs from the weighted second moment
  method.
\newblock {\em Approximation, Randomization, and Combinatorial Optimization.
  Algorithms and Techniques}, 472--482. Springer, 2011.

\bibitem{de1996largest}
W~Fernandez de~la Vega.
\newblock The largest induced tree in a sparse random graph.
\newblock {\em Random Structures \& Algorithms}, 9(1-2):93--97, 1996.

\bibitem{dutta2018induced}
Kunal Dutta and CR~Subramanian.
\newblock On induced paths, holes and trees in random graphs.
\newblock {\em 2018 Proceedings of the Fifteenth Workshop on Analytic
  Algorithmics and Combinatorics (ANALCO)}, 168--177. SIAM, 2018.

\bibitem{erdos1983trees}
Paul Erd\H{o}s and Zbigniew Palka.
\newblock Trees in random graphs.
\newblock {\em Discrete Mathematics}, 46(2):145--150, 1983.

\bibitem{frieze1987large}
Alan~M. Frieze and Bill Jackson.
\newblock Large holes in sparse random graphs.
\newblock {\em Combinatorica}, 7(3):265--274, 1987.

\bibitem{frieze1987large2}
Alan~M Frieze and Bill Jackson.
\newblock Large induced trees in sparse random graphs.
\newblock {\em Journal of Combinatorial Theory, Series B}, 42(2):181--195,
  1987.

\bibitem{janson2011random}
Svante Janson, Tomasz {\L}uczak, and Andrzej Ruci\'nski.
\newblock {\em Random graphs}, volume~45.
\newblock John Wiley \& Sons, 2011.

\bibitem{kuvcera1987large}
Lud{\v{e}}k Ku{\v{c}}era and Vojt{\v{e}}ch R{\"o}dl.
\newblock Large trees in random graphs.
\newblock {\em Commentationes Mathematicae Universitatis Carolinae},
  28(1):7--14, 1987.

\bibitem{luczak1993size}
Tomasz {\L}uczak.
\newblock The size of the largest hole in a random graph.
\newblock {\em Discrete mathematics}, 112(1-3):151--163, 1993.

\bibitem{luczak1988maximal}
Tomasz {\L}uczak and Zbigniew Palka.
\newblock Maximal induced trees in sparse random graphs.
\newblock {\em Discrete Mathematics}, 72(1-3):257--265, 1988.

\bibitem{rucinski1987induced}
Andrzej Ruci{\'n}ski.
\newblock Induced subgraphs in a random graph.
\newblock {\em North-Holland Mathematics Studies}, 144:
  275--296. Elsevier, 1987.

\bibitem{suen1992large}
WC~Stephen Suen.
\newblock On large induced trees and long induced paths in sparse random
  graphs.
\newblock {\em Journal of Combinatorial Theory, Series B}, 56(2):250--262,
  1992.

\end{thebibliography}
\end{document}